\newtheorem{thm}{Theorem}[section]
\newtheorem{lemma}[thm]{Lemma}
\newtheorem{cor}[thm]{Corollary}
\theoremstyle{definition}
\newtheorem{defn}[thm]{Definition}
\newtheorem{conj}[thm]{Conjecture}
\numberwithin{equation}{section}
\newtheorem{claim}{Claim}
\subjclass[2010]{11C08 (primary), 11R29 (secondary)}
\keywords{binary quadratic form, Liouville function, Pell equation}
\begin{document}

\title[Infinitely many sign changes of the Liouville function]{ Infinitely many sign changes of the Liouville function on $x^2+d$
}
\author{Anitha  Srinivasan}
\address{Department of Mathematics, Saint Louis University-Madrid campus, 
Avenida del Valle 34, 28003 Madrid, Spain}

\date{}
\maketitle

\begin{abstract}
We show that the Liouville function $\lambda$ changes sign infinitely often on $n^2+ d$ for any non-zero integer $d$. 

\end{abstract}

 %\noindent
 %{\footnotesize{\textbf{ 2000 Mathematics Subject
%Classification}}: Primary  11D09, 11C08;  Secondary  11Y65. }
\bigskip

 \begin{section} {Introduction }\end{section}

For a given non-zero integer $d$, is $n^2+d$  prime for infinitely many integers  $n$? While hardly anyone would 
doubt that the answer here is a yes, no one has been able to prove it yet! In order to make some progress towards the answer, one could modify the question to ask instead, whether 
$n^2+d$ has an odd number of prime divisors (counting multiplicities) infinitely often. In this work we answer this latter question in the affirmative.

The Liouville function is defined as
$\lambda(n)=(-1)^{\Omega(n)}$, where 
$\Omega(n)$ denotes the number of prime factors of
the integer $n$ counted with multiplicity. In other words,
$\lambda$ is a completely multiplicative function,
where $\lambda(p)=-1$ for each prime $p$ 
and $\lambda(1)$ is taken as $1$.

Cassaigne et al. in \cite{C} made the following conjecture.
\begin{conj}{\label C}(Cassaigne et al.)
If $f(x)\in \mathbb{Z}[x]$ and
$f(x)\neq b(g(x))^2 $ for any
integer $b$  and
$g(x)\in \mathbb{Z}[x]$, then 
$\lambda(f(n)$ changes sign infinitely often.
\end{conj}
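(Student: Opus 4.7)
My strategy is to reduce the conjecture to a small collection of cases, dispatch those already in the literature, and identify precisely where a genuinely new idea is required. The first step is to reduce to squarefree $f$. Writing $f=\prod_i f_i^{a_i}$ with the $f_i$ distinct irreducibles, one has $\lambda(f(n))=\prod_i \lambda(f_i(n))^{a_i}$, so even exponents contribute $+1$ and can be dropped; it therefore suffices to treat $\tilde f:=\prod_{a_i\text{ odd}} f_i$. The hypothesis that $f$ is not of the form $b(g(x))^2$ guarantees exactly that $\tilde f$ has positive degree, so the reduction is nontrivial.

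The second step is to split according to the degrees of the irreducible factors of $\tilde f$. Linear factors are handled by the prime number theorem in arithmetic progressions, which gives $\sum_{n\leq x}\lambda(an+b)=o(x)$ and hence forces both signs to occur infinitely often. For an irreducible quadratic factor with negative discriminant, completing the square and multiplying through by $4a$ gives $4a\cdot f_i(x)=y^2+|\Delta|$ with $y=2ax+b$, so the main theorem of the present paper (applied in the progression $y\equiv b\pmod{2a}$, which the Pell-equation argument can be adapted to deliver) covers this case. For an irreducible quadratic factor with positive discriminant one runs the analogous argument in the real quadratic field attached to the factor, and the mechanism of the present paper should carry over with only cosmetic changes. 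Once a single factor $f_i$ of $\tilde f$ is handled, the remaining factors are controlled by restricting to a ``generic'' residue class modulo a large squarefree modulus on which their parity contribution stabilises.

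The genuinely hard case is an irreducible factor $f_i$ of $\tilde f$ with $\deg f_i\geq 3$, and this is where I expect the plan to stall. The natural attack is to work in the number field $K=\mathbb{Q}[x]/(f_i(x))$ and try to use units in an order of $\mathcal{O}_K$ to produce an infinite sequence of integers $n$ for which $f_i(n)$ factors in $K$ in a controlled way, making the parity of $\Omega(f_i(n))$ predictable. The main obstacle is that the curve $y=f_i(x)$ is not a group when $\deg f_i\geq 3$, so there is no direct analogue of the norm-form/Pell correspondence that drives the quadratic case; units in $K$ simply do not translate into integer points on this curve. Circumventing this would appear to require either a parity-breaking sieve for higher-degree polynomials in the spirit of Friedlander--Iwaniec, or a substantively new algebraic construction, and it is exactly this step that is responsible for the Cassaigne et al.\ conjecture being open in its full generality; I would not expect to close it here.
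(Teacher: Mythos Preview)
The statement you are attempting to prove is \emph{Conjecture~1.1}; the paper does not prove it and does not claim to. The paper's contribution is Theorem~1.3, the special case $f(x)=x^2+d$, and the introduction explicitly lists the full conjecture as open. So there is no ``paper's own proof'' to compare against, and your proposal is not a proof but (as you say yourself at the end) an outline that stalls. That is fine as an honest assessment of the landscape, but two of the steps you present as routine are in fact not.

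First, your reduction ``once a single factor $f_i$ of $\tilde f$ is handled, the remaining factors are controlled by restricting to a generic residue class modulo a large squarefree modulus on which their parity contribution stabilises'' does not work. The Liouville function is not periodic: fixing $n$ in a residue class modulo $M$ does \emph{not} fix $\lambda(f_j(n))$ for the other factors $f_j$. Indeed, controlling $\lambda$ on two or more irreducible polynomials simultaneously is essentially a Chowla-type correlation problem and is exactly what makes the reducible case nontrivial (cf.\ Ter\"av\"ainen's work cited in the paper, which handles products of linear factors by genuinely analytic means, not by a residue-class trick).

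Second, your claim that for an irreducible quadratic factor one can complete the square to $y^2+|\Delta|$ and then invoke ``the main theorem of the present paper (applied in the progression $y\equiv b\pmod{2a}$, which the Pell-equation argument can be adapted to deliver)'' is directly contradicted by the paper. The remark immediately following Theorem~1.3 says that the method \emph{does not} extend to general quadratics $ax^2+bx+c$: the Pell solutions $(x,l)$ produced by the genus argument would have to satisfy the extra congruence $2ax+b\equiv b\pmod{2a}$ on the variable coming from $l$, and there is no mechanism in the construction of $M$ to force this. So even the ``quadratic factor'' branch of your plan is open, not merely the $\deg f_i\ge 3$ branch.
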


This was a weaker form of the following conjecture made by Chowla
  in \cite[p. 96]{Ch}. 

\begin{conj}(Chowla)
Let $f(x)$ be an arbitrary  polynomial with integer coefficients,  
which is not, however, of the form 
$c (g(x))^2 $ where $c$ is an  
integer  and 
$g(x)$ is a polynomial with integer coefficients. Then  
$$\sum_{n=1}^{x} \lambda(f(n))=o(x).$$
\end{conj}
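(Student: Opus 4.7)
The plan is to reduce the statement to a squarefree non-square polynomial and then attack the resulting sum with Halász-type mean-value theorems combined with a Vinogradov-style bilinear decomposition of the Liouville function.

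\emph{Multiplicative reduction.} I would factor $f(x) = c \prod_{i} f_i(x)^{e_i}$ into distinct irreducibles $f_i \in \mathbb{Z}[x]$, with $c \in \mathbb{Z}\setminus\{0\}$. Since $\lambda$ is completely multiplicative and takes values $\pm 1$, one has
\[
\lambda(f(n)) = \lambda(c)\prod_{i}\lambda(f_i(n))^{e_i} = \lambda(c)\prod_{e_i\text{ odd}}\lambda(f_i(n)).
\]
The hypothesis that $f$ is not of the form $c g(x)^2$ forces at least one exponent $e_i$ to be odd, so $\tilde f := \prod_{e_i\text{ odd}} f_i$ is a non-constant squarefree product of distinct irreducibles. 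It suffices to prove $\sum_{n\le x}\lambda(\tilde f(n)) = o(x)$, so from now on I would assume $f$ itself is such a product.

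\emph{Arithmetic setup and mean-value step.} For each irreducible factor $f_j$ the number field $K_j = \mathbb{Q}[x]/(f_j(x))$ controls the local factorisation of $f_j(n)$: the number of roots of $f_j \bmod p$ is governed by a Chebotarev density on the Galois group of the splitting field of $f_j$, and these densities allow one to write a Dirichlet series for $\sum_n \lambda(f(n)) n^{-s}$ in terms of $L$-functions attached to characters of these Galois groups. The expected cancellation would then follow from Halász's theorem for completely multiplicative $\pm 1$-valued functions, applied to the induced function $n \mapsto \lambda(f(n))$ — but only after one rules out the possibility that $\lambda(f(n))$ ``pretends'' to be a twisted real character, which is where the non-square hypothesis on $f$ must be used in an essential arithmetic way.

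\emph{Bilinear decomposition.} To convert the Halász input into an actual bound on $S(x) := \sum_{n\le x}\lambda(f(n))$, I would decompose $\lambda$ via a Heath-Brown/Vaughan identity and split $S(x)$ into Type~I sums $\sum_{m}a_m \#\{n\le x: m \mid f(n)\}$ and Type~II sums $\sum_{m,\ell} a_m b_\ell\,\mathbf{1}[m\ell=f(n)]$. Type~I is handled by counting roots of $f$ modulo $m$ (main term $x\rho_f(m)/m$, Weil-type error for the non-main residue classes). Type~II is attacked by Cauchy--Schwarz followed by an equidistribution bound for $f(n) \pmod q$ on arithmetic progressions, which again rests on Weil/Deligne-type estimates for polynomial congruences.

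\emph{The main obstacle.} The statement to be proved is exactly \emph{Chowla's conjecture for $\lambda$}, which is open for every $\deg f \ge 2$. Unconditionally, only the linear case (and its averaged multi-linear strengthenings due to Matomäki--Radziwi\l\l--Tao) is known; for $\deg f = 2$ even the weaker sign-change version is nontrivial, and is the content the present paper actually establishes for $f(x)=x^2+d$. The Halász step is the essential barrier: showing that $n\mapsto \lambda(f(n))$ is non-pretentious for polynomial $f$ of degree $\ge 2$ lies beyond current technology, and a genuine proof of the conjecture as stated would require a new input into the pretentious framework for polynomial arguments.
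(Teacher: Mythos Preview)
The statement you are trying to prove is not a theorem of the paper: it is \emph{Chowla's conjecture}, recorded there as a conjecture and explicitly flagged as ``extremely hard'' for $\deg f\ge 2$. The paper makes no attempt to prove it; it only proves the far weaker sign-change statement for $f(x)=x^2+d$ (Theorem~1.3). So there is nothing in the paper to compare your argument against.

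As for the proposal itself, you have diagnosed the situation correctly in your final paragraph: the Hal\'asz/pretentious step is not available for $\lambda(f(n))$ when $\deg f\ge 2$, and your Type~II treatment tacitly assumes exactly the kind of equidistribution of $\lambda$ along the values of a degree-$\ge 2$ polynomial that is equivalent in strength to the conjecture you are trying to establish. In other words, the outline is circular at the crucial point, and you have (accurately) conceded that no known input closes the gap. The honest conclusion is that this remains an open problem; a proof attempt is not expected here, and the paper does not supply one.
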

Chowla also added the comment: 
{\noindent \lq\lq} If 
$f(x)=x$ this is equivalent to the   
Prime Number Theorem. If the degree of $f(x)$ is at least $2$, this 
seems an extremely hard conjecture."

Conjecture \ref{C} though far weaker than Chowla's conjecture, has been proved only for some special cases. Several  cases of specific quadratic polynomials have been dealt with by
Cassaigne et al. \cite{C},  Borwein, Choi and Ganguly \cite{BCG} and Srinivasan \cite{S}.  Ter{\"{a}}v{\"{a}}inen \cite{T} proved Conjecture \ref{C} for 
polynomials that factor into linear factors, for those with certain special quadratic factors, and for reducible cubics.  

 Borwein, Choi and Ganguly \cite{BCG} gave a useful algorithm to show that the 
Liouville function changes sign infinitely often for
any  given quadratic polynomial, hence proving the conjecture for a fixed quadratic polynomial. However, this algorithm cannot be used for arbitrary quadratic polynomials.  

In our main theorem below we settle Conjecture \ref{C} for all the quadratic polynomials $x^2+d$. 

\begin{thm}
Let $f(x)=x^2+d$ where $d\ne 0$. Then 
$\lambda(n^2+ d)$ changes sign infinitely often.
\end{thm}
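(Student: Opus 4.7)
The plan is to combine a Brahmagupta-type polynomial identity with a Pell equation argument.

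First I would record the identity
\[
(n(n+1)+d)^2 + d \;=\; (n^2+d)\,\bigl((n+1)^2+d\bigr),
\]
which is the special case $x=n+1,\;y=n$ of $(x^2+d)(y^2+d) = (xy+d)^2 + d(x-y)^2$. Applying the complete multiplicativity of $\lambda$ turns it into the functional equation
\[
\lambda\bigl((n(n+1)+d)^2+d\bigr) \;=\; \lambda(n^2+d)\,\lambda\bigl((n+1)^2+d\bigr).
\]
Were $\lambda(n^2+d)$ eventually of a single sign $\epsilon$, substituting large $n$ into the identity would force $\epsilon = \epsilon^2$, hence $\epsilon = +1$. So the theorem reduces to producing infinitely many $n$ with $\lambda(n^2+d) = -1$.

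For the second step, I would observe that for any $a\ge 1$ with $D := a^2+d$ a non-square positive integer (which excludes only finitely many $a$), the Pell-type equation $X^2-DY^2=-d$ has the obvious initial solution $(X,Y)=(a,1)$, and composing with powers of the fundamental solution of $X^2-DY^2=1$ in $\mathbb{Z}[\sqrt D]$ generates an infinite family $(X_k,Y_k)$. Each satisfies $X_k^2+d = D\,Y_k^2 = (a^2+d)Y_k^2$, so
\[
\lambda(X_k^2+d) \;=\; \lambda(a^2+d)\,\lambda(Y_k)^2 \;=\; \lambda(a^2+d).
\]
Hence once one exhibits even a single $a\ge 0$ with $\lambda(a^2+d)=-1$, the Pell orbit of $(a,1)$ immediately delivers infinitely many $n=X_k$ with $\lambda(n^2+d)=-1$, completing the proof.

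The crux is therefore the existence statement: for every non-zero $d$, produce some $a\ge 0$ with $\lambda(a^2+d)=-1$. I would approach it by contradiction, supposing $\lambda(a^2+d)=+1$ for all $a\ge 0$. Using $(cm)^2+c^2 d_0 = c^2(m^2+d_0)$ one may assume $d$ is squarefree. For each prime $p\mid d$, substituting $a=pm$ and using $\lambda(p)=-1$ gives $\lambda(pm^2+d/p)=-1$ for every $m$. Peeling off the prime factors of $d$ one at a time yields
\[
\lambda(d m^2+1) \;=\; (-1)^{\omega(d)} \qquad \text{for all } m\ge 0,
\]
where $\omega(d)$ is the number of distinct prime divisors of $d$. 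Taking $m=0$ forces $\omega(d)$ even; the case $\omega(d)=0$ is settled by $\lambda(1^2+1)=\lambda(2)=-1$. The residual case in which $d$ is squarefree with a positive even number of prime factors is the delicate one. I would dispatch it by applying the same Brahmagupta-type identity to the polynomial $dm^2+1$, thereby generating a cascade of further sign constraints, and exploiting the structure of the form class group of discriminant $\pm 4d$ together with a suitably chosen auxiliary Pell equation to reach a contradiction. This final case-by-case argument, which rests on the interplay between the quadratic form theory of $\mathbb{Q}(\sqrt{\pm d})$ and explicit Pell solutions, is where I expect the bulk of the technical work to lie.
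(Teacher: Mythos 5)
Your reductions are sound and in fact mirror the paper's own preliminary lemmas: the Brahmagupta identity argument is exactly the content of Lemma 3.1 (quoted from Borwein--Choi--Ganguly), the reduction to squarefree $d$ is Lemma 3.3, and your Pell-orbit amplification of a single value $\lambda(a^2+d)=-1$ into infinitely many is a clean (and slightly more general) version of the paper's Lemma 3.2. The peeling argument correctly reduces everything to the statement: if $d$ is squarefree with $\lambda(d)=+1$ and $\omega(d)\ge 2$, then $\lambda(a^2+d)=-1$ for some $a$. But that statement is the theorem; everything up to it is soft. Your proposal stops precisely there, deferring it to ``the structure of the form class group of discriminant $\pm 4d$ together with a suitably chosen auxiliary Pell equation,'' which is not a proof but a restatement of the difficulty. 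Note also that under your contradiction hypothesis the surviving constraint is $\lambda(dm^2+1)=+1$ for all $m$, and the ordinary Pell equation $x^2-dy^2=1$ only confirms this (it gives $dm^2+1=x^2$), so no contradiction can come from discriminant $4d$ alone; iterating the Brahmagupta identity on $dm^2+1$ likewise produces only consistent constraints.

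What actually closes the gap in the paper is the construction of an auxiliary squarefree integer $M=m_1\cdots m_{r-1}e_1e_2$ (a product of $r+1$ primes, $r=\omega(d)$, chosen by CRT and Dirichlet so that a long list of Legendre-symbol conditions in Theorem 3.5 holds), with $\lambda(M)=-\lambda(d)=-1$, such that for the discriminant $4dM$ the only ambiguous form in the principal genus with first coefficient exceeding $1$ is $(M,0,-d)$. Since $e_1\equiv 3\pmod 4$ forces the fundamental unit of norm $+1$, Lemma 2.2 guarantees some nontrivial ambiguous form lies in the principal class, hence it must be $(M,0,-d)$; this yields solutions of $Mx^2-d\ell^2=1$, whence $\lambda((d\ell)^2+d)=\lambda(dMx^2)=\lambda(dM)=-1$. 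The genus-theoretic elimination of all the other ambiguous factorizations of $dM$ is the bulk of the paper and is entirely absent from your proposal, so as it stands the argument does not prove the theorem.
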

Our approach uses the theory of genera of binary quadratic forms. For every discriminant there is a set of assigned characters that determine the genera of the forms.
  Each form belongs to a genus depending on its associated generic values.  Now consider the form $n^2+d$ with $d>0$. Our main tool is the construction of a positive integer $M$ that satisfies $\lambda(M)=-\lambda(d)$, and such 
that  $(M, 0, -d)$ is the only ambiguous form (with first coefficient greater than $1$) in the principal genus. We also ensure that the norm of the fundamental unit corresponding to the discriminant $dM$ is positive. The positivity of this norm implies that there is an ambiguous form other than the identity form in the principal class and hence in the principal genus. As mentioned above, our choice of $M$ forces this form to be $(M, 0, -d)$, giving us infinitely many solutions $(x, l)$ to 
$Mx^2-dl^2=1$ (all forms in the principal class represent the integer $1$). 
If we now take $n=dl$, then we have 
$\lambda(n^2+d)=\lambda(d(dl^2+1))=\lambda(dMx^2)=-1$. It follows that the negative sign
is attained infinitely often by $ \lambda(n^2+d)$ and this is sufficient to conclude that there 
are infinitely many sign changes (Lemma 3.1).

We remark here that our method does not work for quadratic polynomials not of the 
form $x^2+d$.  For a general quadratic polynomial $ax^2+bx+c$ of discriminant $d$, we may re-write $ax^2+bx+c=m$ 
 as
$(2ax+b)^2-d=4am$. Observe that $X=2ax+b$ and $b$ are of the same parity (as are $b$ and $d$). Furthermore, if $a>1$ or $b\ne 0$, then we require  
 $X\equiv b\pmod {2a}$, so that in the approach above, the integers $l$ coming from 
 the solutions to $Mx^2-dl^2=1$ need to satisfy certain conditions. However in our case of
$x^2+d$, there are no restraints on the integers $l$. 
\begin{section} {  The theory of genera }

Let 
$d\equiv 0\text{ or }1 \mod 4$ be an integer that is not a
square 
%and $R$ will be the unique quadratic order of discriminant
%$$d$ in $K=\mathbb Q(\sqrt{d}).$

A {\sl primitive binary quadratic form} $f=(a, b, c)$ of discriminant $d$ is
a function $f(x, y)=ax^2+bxy+cy^2$, where $a, b,c $ are integers
with $b^2-4 a c=d$ and $\gcd(a, b, c)=1$. All forms considered here
are primitive binary quadratic forms and henceforth we shall
refer to them simply as forms. 

Two forms $f$ and $f'$ are said to be {\it equivalent}, written as
$f\sim f'$,  if for some
$A=\begin{pmatrix} \alpha &\beta \\ \gamma & \delta \end{pmatrix}
\in SL_2(\mathbb Z)$ we have
$f'(x,y)=f(\alpha x+\beta y, \gamma x+\delta y)$. The equivalence classes form an abelian group
called the  {\it class group} with group law given by composition of
forms.

The {\it identity form} is defined as the form $(1,0,\frac{-d}{4})$
or $(1, 1, \frac{1-d}{4})$ depending on whether $d$ is even or odd,
respectively. The class represented by the identity form is 
called the {\it principal} class.
 The {\it inverse} of
$f=(a, b, c)$, denoted by $f^{-1}$ is given by $(a,-b,c)$ or $(c, b, a)$.
%The form $-f=(-a, b, -c)$ is the {\it negative} of the form $f=(a, b, c)$.

A form $f$ is said to represent an integer $m$ if there exist coprime
integers $x$ and $y$ such that $f(x,y)=m$. It is easy to see that 
equivalent forms represent the same integers, as do a form $f$ and 
its inverse $f^{-1}$.

A form  $(a, b, c)$ is said to be {\it ambiguous } if $a$ divides $b$.
It follows then that $a$ divides $d$. 
%Conversely, if $d=rr'$ with
%$\gcd(r, r')=1$, then there is an ambiguous form $(r, x, y)$ with $r|x$.
 Ambiguous classes have orders $1$ or $2$ in the class group. 

%In the case of forms of positive discriminants, each 
%equivalence class is represented by a cycle of
%(reduced) forms. The cycle corresponding to the principal class is
%called the principal cycle. Cycles representing ambiguous classes 
%have exactly two ambiguous forms (see). Let $(A_1, B_1, C_1)$ with 
%$A_1\ne 1$ be the ambiguous form in the principal cycle that is different from the 
%identity form (with first coefficient equal to $1$). Then $Q_1=|A_1|$ is called the central norm.

The  assigned characters for all discriminants is given in \cite[Page 138]{R}. Below, we list these characters
only for  discriminants $4d$ with $d$ square-free, as this is the case we are interested in.
\begin{defn}{(Assigned characters)}
Let $D>0$ be a square-free integer and let 
$r_1, r_2\cdots r_t$ be 
its distinct odd prime divisors. Let $m$ be an integer such that 
$\gcd(m, 2D)=1$.
   For each $1\le i\le t$, let $\chi_i(m)=(\frac{m}{r_i})$ be a character modulo $r_i$. 
Let 
$\delta(m)=(-1)^{\frac{m-1}{2}}$ and 
$\eta(m)=(-1)^{\frac{m^2-1}{8}}$ be characters modulo $4$ and $8$ respectively. The set of {\it assigned characters} associated
to $D$ are given by the following rule:
\begin{table}[ht]
\center
\begin{tabular}{c| c |}
Discriminant & Assigned characters\\
%$D\equiv 1\pmod 4$ & $\chi_1, \cdots \chi_t$\\
$D=4d, d\equiv 1\pmod 4$ & $\chi_1, \cdots \chi_t$\\
$D=4d, d\equiv 3\pmod 4$ & $\chi_1, \cdots \chi_t, \delta$\\
$D=4d, d\equiv 2\pmod 8$ & $\chi_1, \cdots \chi_t, \eta$\\
$D=4d, d\equiv 6\pmod 8$ & $\chi_1, \cdots \chi_t, \delta\eta$
\end{tabular}
%\label{Table 1}
\end{table}
\end{defn}
Let $f$ be a form of discriminant $D$ and  $m$ with $\gcd(2d, m)=1$ be any integer
represented by $f$. Then the generic values assigned to $f$ are the 
values of the assigned characters  at $m$.
The {\it principal genus} consists of all forms with all generic values equal to $1$. Clearly 
the principal class belongs to the principal genus, as it represents the integer $1$.
It can be shown that the generic values for a given class of forms is
independent of the integer $m$ represented.

The subject of the following lemma is a particular factorization 
of the discriminant that corresponds to the principal class, in the case when the fundamental unit has positive norm.  The lemma is a slightly altered version of  \cite[Proposition 3.1]{MS}, where we have added a remark that is pertinent to the problem at hand.
\begin{lemma}{\rm (\cite[Proposition 3.1]{MS})}
\label{norm}
Let $d$ be a positive integer that is not a square. Suppose that the norm 
of the fundamental unit of the quadratic field $\mathbb{Q}(\sqrt{d})$ 
is positive. Then one of the following occurs.
\begin{enumerate}
\item
There exists a factorization $d=ab$ with
 with $1<a<b$ such that one of the following equations has an integral  solution $(x,y)$.
$$ax^2-by^2=\pm 1.$$ 
\item
  There exists a factorization $d=ab$ with $1\le a<b$  and $d\not\equiv 1, 2\pmod 4$,
such that one of the following equations has an integral  solution $(x,y)$ with $xy$ odd.
\end{enumerate}   
$$ ax^2-by^2=\pm 2.$$
Furthermore, for $d=ab$, exactly one of the following ambiguous forms 
 is in the principal class, where the first set corresponds to part 1 above,
and the second to part 2:
\begin{equation}\label{normform}
(a, 0, -b) {\text{ with }} 1<a, {\text { or }}  
\left(2a, 2a, \frac{a-b}{2}\right) 
 {\text{ with }} 1\le a.
\end{equation}
\end{lemma}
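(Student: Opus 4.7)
The plan is to start from a fundamental positive solution to the Pell equation and to extract the required factorization of $d$ by splitting the identity $(u-1)(u+1) = dv^2$ into coprime parts, then to read off the ambiguous form.

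Since the fundamental unit has norm $+1$, the equation $x^2 - dy^2 = 1$ has a solution $(u, v)$ with $u, v > 0$ and $u \ge 2$. I would split into cases on the parity of $u$. If $u$ is even, then $u-1$ and $u+1$ are coprime odd integers, so $v$ and $d$ are odd, and reducing modulo $4$ forces $d \equiv 3 \pmod{4}$. The coprimality of $u \pm 1$ lets me partition the prime divisors of $d$ according to which factor they divide, producing $d = ab$ with $\gcd(a,b) = 1$ and $v = xy$ with $\gcd(x,y) = 1$ such that $u - 1 = ax^2$ and $u + 1 = by^2$. Subtracting yields $ax^2 - by^2 = -2$ with $x, y$ both odd, which lands the lemma in part (2). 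If $u$ is odd, I set $u - 1 = 2P$, $u + 1 = 2Q$ with $Q = P + 1$ and hence $\gcd(P,Q) = 1$, so $dv^2 = 4PQ$. When $d$ is odd this forces $v = 2w$, and the same prime-partition applied to $dw^2 = PQ$ produces $d = ab$ coprime with $P = ax^2$, $Q = by^2$, giving $ax^2 - by^2 = -1$ and landing in part (1). When $d$ is even one tracks $2$-adic valuations of $u \pm 1$, $v$, and $d$ and absorbs the appropriate power of $2$ into one of the factors; the output still lands in part (1) or (2) depending on the subcase.

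To place the displayed ambiguous form into the principal class I need to exhibit a representation of $1$. In part (1) the form $(a, 0, -b)$ represents $aX^2 - bY^2$ directly; if the sign produced above is $-1$ rather than $+1$, I interchange $a$ and $b$ to pass to the companion ambiguous form $(b, 0, -a)$, which is then the one representing $1$. In part (2) the substitution $U = 2X + Y$, $V = Y$ converts $2aX^2 + 2aXY + \tfrac{a-b}{2}\, Y^2 = 1$ into $aU^2 - bV^2 = 2$ with $U \equiv V \pmod{2}$, and the solution $(x, y)$ to $ax^2 - by^2 = \pm 2$ with $xy$ odd supplies the required $(U, V)$ (again after swapping $a \leftrightarrow b$ if the sign came out $-2$). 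The strict inequality $1 < a$ in part (1) is forced by the hypothesis: the factorization $a = 1, b = d$ reduces the equation to $x^2 - dy^2 = \pm 1$, and the $-1$ case contradicts positivity of the norm, while the $+1$ case gives only the identity form.

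The main obstacle I expect is the $2$-adic bookkeeping in the $u$-odd, $d$-even branch, where the valuations of $u \pm 1$, $v$, and $d$ split into several subcases and one must distribute the powers of $2$ between $a$ and $b$ carefully to obtain a valid coprime factorization, sometimes landing in part (2) with the parity condition $xy$ odd rather than in part (1). The sign ambiguity in $ax^2 - by^2 = \pm 1$ or $\pm 2$ also needs care, since precisely one of the two labellings $(a, b)$ and $(b, a)$ puts the corresponding ambiguous form into the principal class; this is the content of the ``exactly one'' clause in the lemma.
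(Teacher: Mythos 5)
Your proposal is correct in outline, but it takes a different route from the paper on the first half of the lemma: the paper does not prove parts (1) and (2) at all, it simply cites them from \cite[Proposition 3.1]{MS} and only supplies the argument for the ``furthermore'' clause. You instead reprove (1) and (2) from scratch by the classical splitting of $(u-1)(u+1)=dv^2$ into coprime factors using the fundamental solution $u^2-dv^2=1$; this is essentially the content of the cited proposition, so your version is self-contained where the paper outsources. The price is exactly the bookkeeping you flag: when $d$ is not squarefree the factorization $d=ab$ need not be into coprime parts (e.g.\ $d=8$ forces $a=2$, $b=4$), so one must define $a$ as the full $d$-part of $u-1$ rather than its squarefree kernel, and the $2$-adic case analysis for $u$ odd, $d$ even genuinely splits into subcases that decide whether one lands in part (1) or part (2). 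On the ``furthermore'' clause your argument coincides with the paper's: your substitution $U=2X+Y$, $V=Y$ is the inverse of the paper's $(\alpha,\beta)=(\tfrac{x-y}{2},y)$, and the sign-swapping between $(a,0,-b)$ and $(b,0,-a)$ (resp.\ $(2a,2a,\tfrac{a-b}{2})$ and $(2b,2b,\tfrac{b-a}{2})$) is identical. One caveat applying equally to both proofs: what is actually established is that \emph{at least} one of the listed ambiguous forms lies in the principal class; the ``exactly one'' (uniqueness) assertion is not argued in either version, though it is also not what the rest of the paper uses.
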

\begin{proof}
Parts 1 and 2 are given in \cite[Proposition 3.1]{MS}. If $ ax^2-by^2= 1$ holds, then clearly the form $(a, 0, -b)$ represents $1$ and hence is in the principal class. Similarly, if $ ax^2-by^2= -1$, then $(b, 0, -a)$ is in the principal class. 

Now consider $ax^2-by^2= 2$. Taking 
$(\alpha, \beta)=(\frac{x-y}{2}, y)$ we may verify that 
$2a\alpha^2+2a\alpha\beta+\frac{a-b}{2}\beta^2=1$. Hence the form 
$\left(2a, 2a, \frac{a-b}{2}\right)$ is in the principal class. The equation with 
the negative sign would put the form 
$\left(2b, 2b, \frac{b-a}{2}\right)$
 in the principal class.
\end{proof}
\end{section}
\begin{section} {Construction of the integer $M$}
We begin this section with the following lemma,  stated a little differently from its original version, where only one
negative sign for $\lambda(f(n))$ was required.
\begin{lemma}{\rm (\cite[Theorem 2.4]{BCG})}
Let $f(x)=x^2+bx+c$ with $b, c\in \mathbb{Z}$.
Then if $\lambda(f(n))$  is equal to $-1$ infinitely often, then 
it changes sign infinitely many times.
%$\{\lambda(f(n))\}$  changes sign infinitely often.
\end{lemma}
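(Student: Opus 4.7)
The plan is to argue by contradiction. Suppose that $\lambda(f(n))$ changes sign only finitely often; then there exists a threshold $N$ such that $\lambda(f(n))$ is constant on $n \ge N$, and since by hypothesis $-1$ is attained infinitely often, this constant sign must equal $-1$. The task therefore reduces to deriving a contradiction from the assumption that $\lambda(f(n)) = -1$ for every $n \ge N$.

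The central tool I would use is the polynomial identity
\[
f\bigl(n + f(n)\bigr) \;=\; f(n)\,f(n+1),
\]
which holds for every monic quadratic $f(x) = x^2 + bx + c$. To verify it I would start from the telescoping formula $f(n+k) - f(n) = k(2n + b + k)$: specializing to $k = f(n)$ gives
\[
f\bigl(n + f(n)\bigr) \;=\; f(n)\bigl(1 + 2n + b + f(n)\bigr),
\]
and rewriting $1 + 2n + b + f(n) = (n+1)^2 + b(n+1) + c = f(n+1)$ produces the claimed factorization. This identity is the only non-mechanical ingredient in the argument, and I expect spotting it to be the main (indeed essentially the only) obstacle.

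To finish, I would pick any $n \ge N$ large enough that $f(n) > 0$ and $n + f(n) \ge N$; both hold for all sufficiently large $n$ since $f$ is a monic quadratic with positive leading coefficient. Applying the complete multiplicativity of $\lambda$ to the identity then yields
\[
-1 \;=\; \lambda\bigl(f(n+f(n))\bigr) \;=\; \lambda(f(n))\,\lambda(f(n+1)) \;=\; (-1)(-1) \;=\; 1,
\]
the desired contradiction. No ingredient beyond the eventual-constancy hypothesis and the identity above is needed, and the argument uses monicity of $f$ in an essential way (it is exactly what forces $1 + 2n + b + f(n) = f(n+1)$), which is consistent with the fact that the paper's main theorem addresses only quadratics of this monic shape.
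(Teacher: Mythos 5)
Your proof is correct: the identity $f(n+f(n))=f(n)f(n+1)$ is verified accurately, and the reduction to an eventually-constant sign plus the choice of $n$ large enough that $f(n)>0$ and $n+f(n)\ge N$ closes the argument with no gaps. The paper itself gives no proof of this lemma, deferring entirely to the cited reference \cite{BCG}, and your argument is precisely the standard one used there, so nothing further is needed.
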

\begin{lemma} Let $d>0$ be an integer that is not a square.
There are infinitely many integers $n$ such that 
$\lambda(n^2+d)=\lambda(d)$.
\end{lemma}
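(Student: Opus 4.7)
The plan is to produce the required $n$'s explicitly using Pell's equation. Since $d>0$ is not a perfect square, the Pell equation
\begin{equation*}
x^2-dl^2=1
\end{equation*}
has infinitely many integral solutions $(x,l)$ with $l>0$.

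For each such solution, set $n=dl$. Then a direct computation yields
\begin{equation*}
n^2+d=d^2l^2+d=d(dl^2+1)=dx^2.
\end{equation*}
Applying the Liouville function and using that $\lambda$ is completely multiplicative together with $\lambda(x^2)=1$ gives
\begin{equation*}
\lambda(n^2+d)=\lambda(d)\lambda(x^2)=\lambda(d).
\end{equation*}
Distinct solutions $l$ of the Pell equation produce distinct values of $n=dl$, so this procedure furnishes infinitely many integers $n$ with $\lambda(n^2+d)=\lambda(d)$, completing the proof.

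There is no serious obstacle here: the whole argument rests on the elementary identity $n^2+d=dx^2$ that results from the substitution $n=dl$ in $x^2-dl^2=1$, together with the classical infinitude of Pell solutions when $d$ is a positive non-square. This lemma is the \emph{easy} half of the dichotomy needed for Theorem~1.1; the difficulty in the main theorem lies in producing infinitely many $n$ with $\lambda(n^2+d)=-\lambda(d)$, which is where the genus-theoretic construction of the integer $M$ and Lemma~\ref{norm} come into play.
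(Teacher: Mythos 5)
Your proof is correct and rests on the same engine as the paper's: the Pell equation $x^2-dl^2=1$ together with the identity $n^2+d=d(dl^2+1)=dx^2$ for $n=dl$. The only difference is cosmetic --- the paper fixes a single Pell solution $(r,l)$, forms $m=dr^2$, and then invokes the infinitely many representations of $d$ by the form $(m,0,-1)$ to generate infinitely many $n$, whereas you vary over the Pell solutions directly, which is if anything cleaner and matches how the paper itself uses the same substitution $n=dl$ in Lemma 3.4 and in Section 4.
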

\begin{proof}
It is well known that the Pell equation 
$x^2-dy^2=1$ has infinitely many solutions $(x, y)$.  For a solution $(r, l)$, let 
$m=d+d^2l^2=d(1+dl^2)=dr^2$. Observe that the form 
$(m, 0, -1)$ represents $d$ via the representation 
$(1, dl)$. Thus there are infinitely many representations $(\alpha, \beta)$ satisfying
$m\alpha^2-\beta^2=d$, so that we may choose $\beta$ as large as we please. 
We have $\beta^2+d=m\alpha^2$ from which it follows that 
$\lambda(\beta^2+d)=\lambda(m)=\lambda(dr^2)=\lambda(d)$, and the lemma follows.
\end{proof}
\begin{lemma}
Let $d$ be a square-free integer. If 
$\lambda(n^2+d)$ changes sign infinitely often, then 
 $\lambda(n^2+dt^2)$ also changes sign infinitely often for any integer $t$.
\end{lemma}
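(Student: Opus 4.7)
The plan is to reduce the statement about $\lambda(n^2 + dt^2)$ to the hypothesis on $\lambda(n^2+d)$ by restricting $n$ to multiples of $t$. The key observation is that complete multiplicativity of $\lambda$ gives $\lambda(t^2) = 1$, so any square factor can be stripped away without affecting values of $\lambda$.

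More precisely, I will consider the subsequence $n = tk$ for $k \in \mathbb{Z}_{>0}$. Then
$$n^2 + dt^2 = t^2 k^2 + dt^2 = t^2(k^2+d),$$
and therefore
$$\lambda(n^2+dt^2) = \lambda(t^2)\lambda(k^2+d) = \lambda(k^2+d).$$
By hypothesis, $\lambda(k^2+d)$ takes the values $+1$ and $-1$ each for infinitely many $k$. Hence along the subsequence $n = tk$, the function $\lambda(n^2+dt^2)$ also takes both signs infinitely often, and in particular $\lambda(n^2+dt^2)$ changes sign infinitely often on the full sequence.

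There is no real obstacle here; the argument is a direct application of complete multiplicativity. The squarefreeness assumption on $d$ is not actually used in the reduction itself, but its role in the paper is to let this lemma upgrade the main theorem from squarefree $d$ to arbitrary nonzero $d$, since every nonzero integer has a unique representation as $d t^2$ with $d$ squarefree. (One tacitly excludes the degenerate case $t=0$, where $\lambda(n^2)=1$ for all $n$.)
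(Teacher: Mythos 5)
Your proposal is correct and is essentially identical to the paper's own one-line proof, which rests on the same identity $\lambda((nt)^2+dt^2)=\lambda(t^2(n^2+d))=\lambda(n^2+d)$. Your added remark about excluding $t=0$ is a sensible (and accurate) clarification, but the argument itself is the same.
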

\begin{proof}
The claim follows from the observation that $\lambda(t^2(n^2+d))=\lambda((nt)^2+dt^2)=\lambda(n^2+d)$.
\end{proof}
\begin{lemma}
If $d=p$ is prime, then $\lambda(n^2\pm d)$ changes sign infinitely often.
\end{lemma}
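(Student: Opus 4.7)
The plan splits according to the sign of $d=\pm p$. For $\lambda(n^2+p)$, Lemma~3.2 applied with $d=p$ provides infinitely many $n$ with $\lambda(n^2+p)=\lambda(p)=-1$, and Lemma~3.1 converts this into infinitely many sign changes.

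For $\lambda(n^2-p)$, the argument depends on the norm of the fundamental unit $\varepsilon$ of $\mathbb{Q}(\sqrt p)$. When $p=2$ or $p\equiv 1\pmod 4$, this norm equals $-1$, so $X^2-pY^2=-1$ has infinitely many integer solutions $(X,Y)$; for each such, taking $n=pY$ gives
\[
n^2-p \;=\; p^2Y^2-p \;=\; p(pY^2-1) \;=\; pX^2,
\]
so $\lambda(n^2-p)=\lambda(p)\lambda(X^2)=-1$, and Lemma~3.1 closes this subcase.

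The remaining case $p\equiv 3\pmod 4$ (with $N(\varepsilon)=+1$ so that the Pell $-1$ route fails) is the technical heart. Here I would invoke the genus-theoretic construction of the introduction with the roles of $d$ and $M$ interchanged: build an auxiliary positive integer $M$ with $\lambda(M)=1$ such that $(p,0,-M)$ is the unique non-identity ambiguous form in the principal genus of discriminant $4pM$, and such that the fundamental unit of $\mathbb{Q}(\sqrt{pM})$ has positive norm. Lemma~2.2 then forces $(p,0,-M)$ into the principal class, yielding infinitely many solutions $(X,Y)$ of $pX^2-MY^2=1$; setting $n=pX$ gives $n^2-p=p(pX^2-1)=pMY^2$, so $\lambda(n^2-p)=\lambda(p)\lambda(M)=-1$, and Lemma~3.1 concludes. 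The main obstacle is the construction of $M$ itself: a direct character computation shows that the $\delta$ genus character forces $(p,0,-M)$ out of the principal genus whenever $M$ is odd and $p\equiv 3\pmod 4$, so one must take $M$ even (of the form $2q$ or $2q_1q_2$, with parity arranged so that $\lambda(M)=1$) and choose the auxiliary primes via Dirichlet's theorem on primes in arithmetic progressions, subject to congruence conditions modulo $8p$ obtained from quadratic reciprocity, with a short case analysis depending on $p\pmod 8$.
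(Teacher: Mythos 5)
Your first two cases match the paper: $n^2+p$ via the positive Pell equation (Lemma 3.2 plus Lemma 3.1), and $n^2-p$ for $p=2$ or $p\equiv 1\pmod 4$ via the negative Pell equation. Your overall strategy for the remaining case $p\equiv 3\pmod 4$ --- build $M$ with $\lambda(M)=1$ so that $(p,0,-M)$ is the only candidate ambiguous form in the principal genus for discriminant $4pM$, then use Lemma 2.2 to get $pX^2-MY^2=1$ --- is also exactly the paper's. But your execution of the construction contains a genuine error. You claim that the character $\delta$ forces $(p,0,-M)$ out of the principal genus for \emph{every} odd $M$, and conclude that $M$ must be even. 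This is false: $\delta$ is an assigned character for discriminant $4d'$ only when $d'\equiv 3\pmod 4$ (see Definition 2.1). The paper takes $M=e_1e_2$ with $e_1\equiv 3\pmod 4$ and $e_2\equiv 1\pmod 4$, so that $pM\equiv 1\pmod 4$ and the assigned characters are only the Legendre symbols modulo $p$, $e_1$, $e_2$; there is then no $\delta$-obstruction at all. (Your computation is correct only for $M\equiv 1\pmod 4$, where indeed every odd integer represented by $(p,0,-M)$ is $\equiv 3\pmod 4$; for $M\equiv 3\pmod 4$ the form represents integers in both odd residue classes mod $4$, which is precisely the sign that $\delta$ is not a genus character there.)

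This misdiagnosis sends you down an unexecuted and harder path: with $M$ even the discriminant satisfies $pM\equiv 2$ or $6\pmod 8$, so you would have to control the extra characters $\eta$ or $\delta\eta$, and you give no actual choice of congruence conditions or verification that the remaining ambiguous forms are eliminated. The missing idea is the paper's specific choice: primes $e_1\equiv 3\pmod 4$, $e_2\equiv 1\pmod 4$ with $\left(\frac{p}{e_1}\right)=\left(\frac{p}{e_2}\right)=1$ and $\left(\frac{e_1}{e_2}\right)=-1$, obtained by CRT and Dirichlet. These conditions alone eliminate every form $(a,0,-b)$ with $ab=pe_1e_2$ and $a\neq p$ from the principal genus (checking the characters mod $p$ and mod $e_2$ on $4a-b$ or $a-4b$), leaving $(p,0,-m)$ in the principal class, whence $pk^2-my^2=1$ and $\lambda(n^2-p)=\lambda(pm)=-1$ for $n=pk$. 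As written, your proof of the $p\equiv 3\pmod 4$ case is incomplete and rests on an incorrect premise.
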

\begin{proof}
A well known fact is that if $p\equiv 1\pmod 4$ then both Pell equations $y^2-pk^2=\pm 1$ have infinitely many solutions.
 Let $n=dk$. 
Then $n^2\pm d=d(dk^2\pm 1)=dy^2$ and thus 
$\lambda(n^2\pm d)=\lambda(d)=-1$ and the result follows from Lemma 3.1. Observe that in the case of
$n^2+d$ we used the positive Pell equation, which holds for any $p$. Hence it remains to consider the case 
$n^2-p$ with $p\equiv 3\pmod 4$.

In this case the fundamental unit has positive norm. Using the Chinese remainder theorem and 
Dirichlet's theorem of primes in arithmetic progressions, we can find 
primes $e_1\equiv 3\pmod 4$ and 
$e_2\equiv 1\pmod 4$ that satisfy the following:
\begin {enumerate}
\item 
$\left(\frac{p}{e_1}\right)=
\left(\frac{p}{e_2}\right)=1$.
\item 
$\left(\frac{e_1}{e_2}\right)=-1.$
\end{enumerate}
Let $m=e_1e_2$. Then 
 Lemma 2.2 for $d=mp$ applies; in particular, part 1 holds. Thus one of
 the forms $(a, 0, -b)$, where $ab=mp$ is in the principal class, and hence in the 
principal genus. We will show that the only such form in the principal genus is
$(p, 0, -m)$. To determine the generic characters of the form $(a, 0, -b)$ we will use 
the integers $\theta_1=4a-b$ or $\theta_2=a-4b$ that satisfy $\gcd(\theta_1\theta_2, 2mp)=1$ and are
represented by the form in question.
We will work with the generic characters  modulo $p, e_1$ and $e_2$.

When $a=e_1$ (and $b=pe_2$) we have 
$\left(\frac{\theta_2}{p}\right)=
\left(\frac{e_1}{p}\right)=-1$. Similarly for the forms 
with $a=pe_1$ and $a=e_1e_2$, we have 
$\left(\frac{\theta_1}{p}\right)=
\left(\frac{-e_2}{p}\right)=-1$, 
and 
$\left(\frac{\theta_2}{p}\right)=
\left(\frac{e_1e_2}{p}\right)=-1$
respectively. For the forms with 
$a=e_2$ and $a=pe_2$ we consider the character modulo $e_2$, to get
$\left(\frac{\theta_1}{e_2}\right)=
\left(\frac{-pe_1}{e_2}\right)=-1$ for the former and 
$\left(\frac{\theta_1}{e_2}\right)=
\left(\frac{-e_1}{e_2}\right)=-1$ for the latter. 

We have eliminated above all forms $(a, 0, -b)$ with $a\ne p$ from the 
principal genus (and hence from the principal class).
Thus the only form from (\ref{normform}) in the principal class is $(p, 0, -m)$ and therefore
we have $pk^2-my^2=1$, and as before, 
  taking $n=pk$ we get 
$n^2-p=p(pk^2-1)=pmy^2$ yielding
$\lambda(n^2-p)=\lambda(pm)=-1$ and again the result follows from Lemma 3.1.

\end{proof}
\begin{thm}\label{m}
Let $d=p_1 p_2\cdots p_r$, where $r\ge 2$ and $p_i$ are primes with 
 $p_r=2$ if $d$ is even.  Let 
$m=m_1m_2\cdots m_{r-1}$, where $m_i$ are primes. 
If $r=2$, then $m_1\equiv 5 \pmod 8$. For $r\ge 3$,
$m_i\equiv 1\pmod 8$ for $1\le i\le r-2 $ and 
$m_{r-1}\equiv 5\pmod 8$. Let $e_1, e_2$ be primes that are congruent to $-1$ and $3t\pmod 8$ respectively,
where $t=1$ or $-1$. Moreover, if 
$\lambda(d)=-1$, then $t=-1$.
  Suppose that the Legendre symbols of $p_i$ modulo 
 $m_i$ and $e_i$ 
are as given in the following table, where  the plus and minus signs stand 
for $1$ and $-1$, and $s=-t$.
\begin{table}[ht]
\center
\begin{tabular}{c| c |c| c| c| c| c| c| c|c|}
$(\frac{p_j}{*})$  & $m_1$ & $m_2$ &  $m_3$ & $\cdots$ & $m_{r-2}$ & $m_{r-1}$ & $e_1$ & $e_2$ \\ 
\hline
$p_1$ &  $-$ & $- $ &  $- $ & $\cdots$ & $- $ & $- $ & $\lambda(d)s$ & $t$ \\ 
$p_2$ &  $- $ & $ +$ & $+ $ & $\cdots$ & $ +$ & $ +$ & $\lambda(d)s$ & $\lambda(d)t$ \\ 
$p_3$ &  $ +$ & $- $ & $ +$ & $ +$ & $\cdots$ & $ +$ & $\lambda(d)s$ & $\lambda(d)t$ \\ 
$p_4$ &  $ +$ & $ +$ & $- $ & $ +$ & $\cdots$ &  $ +$ & $\lambda(d)s$ & $\lambda(d)t$ \\ 
$\vdots$  &  & & & & $\cdots$ & & & \\ 
$p_{r-1}$ &  $ +$ & $ +$ & $ +$ & $\cdots$ &  $-$  & $ +$ & $\lambda(d)s$ & $\lambda(d)t$ \\ 
$p_r$  & $+$ & $+$ & $+$ & $\cdots$ &  $+$ &  $-$ & $+$ & $-$ \\ 
\end{tabular}
\caption{\label{Table 1}}
\end{table}

Additionally, assume that the primes $m_i$ and $e_i$ satisfy the following.
\begin{enumerate}
\item
$\left(\frac{m_i}{e_j}\right)=1 {\text{ for all }} i, j 
{\text{ and }} 
\left(\frac{m_i}{m_j}\right)=1 {\text{ for all }} i \ne j
.$
\item
$\left(\frac{e_1}{e_2}\right)=t,
\left(\frac{e_2}{e_1}\right)=-1.
$
\end{enumerate}

Then for $D=mde_1e_2=ab$, of all the  forms  given in (\ref{normform}), 
the only one that is in the principal genus is,  $(me_1e_2, 0, -d)$ if $t=1$ and  $(d, 0, -me_1e_2)$ if $t=-1$.
\end{thm}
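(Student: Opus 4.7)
The plan is to apply Lemma 2.2 to $D = mde_1e_2$ and then use the assigned characters of the discriminant $4D$ to pin down which form in (\ref{normform}) lies in the principal genus. Since $e_1 \equiv -1 \pmod 4$ divides $D$, the fundamental unit of $\mathbb{Q}(\sqrt{D})$ has norm $+1$ (otherwise every odd prime divisor of $D$ would have to be $\equiv 1 \pmod 4$), so Lemma 2.2 guarantees that at least one form in (\ref{normform}) lies in the principal class, and a fortiori in the principal genus.

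By Definition 2.1 the assigned characters of $4D$ are the Legendre characters $\chi_\ell(n) = \bigl(\tfrac{n}{\ell}\bigr)$ for each odd prime $\ell \mid D$ (namely the $p_j$'s, $m_i$'s, and $e_1, e_2$), together with at most one auxiliary character $\delta$, $\eta$, or $\delta\eta$ determined by $D \bmod 8$ (computable from the congruences on $m_i, e_k$ and the parity of $d$). For a form $(a,0,-b)$ with $ab = D$, I would evaluate these on the auxiliary integer $\theta_1 = 4a - b$ (or $\theta_2 = a - 4b$ when parity requires it) via the identities $\bigl(\tfrac{\theta_1}{\ell}\bigr) = \bigl(\tfrac{-b}{\ell}\bigr)$ for $\ell \mid a$ and $\bigl(\tfrac{\theta_1}{\ell}\bigr) = \bigl(\tfrac{a}{\ell}\bigr)$ for $\ell \mid b$. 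Parametrising $a$ by $(J, I, E)$, the subsets of indices of $p_j, m_i, e_k$ that divide $a$, the characters unfold as products of elementary Legendre symbols, all controlled by Table 1, hypotheses (1), (2), and quadratic reciprocity. Specifically, $m_i \equiv 1 \pmod 4$ gives $\bigl(\tfrac{m_i}{p_j}\bigr) = \bigl(\tfrac{p_j}{m_i}\bigr)$ from Table 1 together with $\bigl(\tfrac{m_i}{m_j}\bigr) = \bigl(\tfrac{m_i}{e_k}\bigr) = 1$ from hypothesis (1); the prime $e_1 \equiv 3 \pmod 4$ (and $e_2$ when $t = 1$) contributes an extra reciprocity sign $\bigl(\tfrac{-1}{p_j}\bigr)$ to $\bigl(\tfrac{e_1}{p_j}\bigr)$.

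I would first verify the target has every character equal to $+1$. For $t = 1$ and target $(me_1e_2, 0, -d)$: the characters $\chi_{m_i}$ reduce to $\bigl(\tfrac{-d}{m_i}\bigr) = +1$ because $m_i \equiv 1 \pmod 4$ and each $m_i$-column of Table 1 contains exactly two $-$ entries; the characters $\chi_{e_1}, \chi_{e_2}$ reduce to similar products, forced to $+1$ by the entries $\lambda(d)s$ and $t$ in the last two columns together with $s = -t$, the parity of $r$, and $\lambda(d) = (-1)^r$; the characters $\chi_{p_j}$ reduce via reciprocity to products over the $m_i$'s and $e_k$'s in $a$, again yielding $+1$ by a row-wise accounting of Table 1. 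The auxiliary character, when present, is handled by the mod-$8$ conditions on $m_i$ and $e_k$. The case $t = -1$ is symmetric, with the roles of $a$ and $b$ swapped.

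The bulk of the proof is ruling out every other factorisation. The key combinatorial feature is that $\chi_{m_i} = \prod_{j \in J} \bigl(\tfrac{p_j}{m_i}\bigr)$ once the cross conditions absorb the $m$- and $e$-factors, so the $r-1$ columns for $m_1, \ldots, m_{r-1}$ in Table 1 \emph{separate} the proper nonempty subsets of $\{p_1, \ldots, p_r\}$: for any such $J$, some $\chi_{m_i}$ sees an odd number of $-$ entries and therefore equals $-1$. For the residual cases with $J = \emptyset$ or $J = \{1, \ldots, r\}$ but $(I, E)$ not matching the target, the mismatch is detected by the characters $\chi_{p_j}$ (reciprocity makes the placement of $m_i$'s and $e_k$'s visible in $\chi_{p_j}$) or by $\chi_{e_1}, \chi_{e_2}$, using hypothesis (2) and in particular $\bigl(\tfrac{e_2}{e_1}\bigr) = -1$. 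Doubled forms $(2a, 2a, (a-b)/2)$ from case (2) of Lemma 2.2 are treated by the same accounting. The main obstacle is the combinatorial bookkeeping: with $2r + 1$ odd prime factors of $D$ and roughly $2^{2r}$ factorisations to rule out, each non-target configuration must be matched with the specific column of Table 1 or entry of hypotheses (1), (2) that detects it. The intricate hypotheses are fine-tuned exactly so that each non-target factorisation produces at least one $-1$ character, and verifying this systematically is the heart of the proof.
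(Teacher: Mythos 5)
Your plan is essentially the paper's proof: use the assigned characters of $4D$, evaluated at the represented integers $\theta_1=4a-b$ and $\theta_2=a-4b$, first to check that the target form has all generic values $+1$ and then to eliminate every other factorization in (\ref{normform}). Your central combinatorial observation --- that the $m_i$-columns of Table 1 separate the proper nonempty subsets of $\{p_1,\dots,p_r\}$, so that any form $(R_1d_1,0,-R_2d_2)$ with $1<d_1,d_2$ is killed by some character $\chi_{m_i}$ once hypothesis (1) absorbs the $m$- and $e$-factors --- is exactly the first half of the paper's Claim 2, and it is correct. However, what you label ``combinatorial bookkeeping'' and defer is where the actual content of the theorem lies, and it is not routine; the residual cases are not all ``detected by $\chi_{p_j}$ or $\chi_{e_1},\chi_{e_2}$'' in a uniform way. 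Two concrete instances: (i) for the form $(R_1d,0,-R_2)$ with $e_1e_2\mid R_2$, the $e$-characters give only $\left(\frac{\theta_2}{e_1}\right)=s$, which is $+1$ when $s=1$, so they fail to detect the form and the paper must fall back on the $p_2$-row of Table 1 via $\left(\frac{\theta_1}{p_2}\right)$, and then chase $m_1,\dots,m_{r-2}$ through rows $p_3,\dots,p_{r-1}$ --- with a separate mod-$8$ argument using $\eta$ and $\delta\eta$ when $p_2=2$; (ii) for even $d$ there is no Legendre character at $p_r=2$, so the last row of the table cannot be used in the reciprocal direction and must be replaced throughout by computations such as $\theta\equiv 5\pmod 8$, which is precisely why the hypotheses impose $m_{r-1}\equiv 5\pmod 8$, $e_1\equiv -1$ and $e_2\equiv 3t\pmod 8$. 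Your proposal is therefore a correct road map that matches the paper's route, but the deferred case analysis \emph{is} the proof, and as written the argument is incomplete.
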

\begin{proof}
We first note the following for all $i$,  from the table in the hypotheses, where the first observation follows from looking at the columns of the table and  the second one, the rows.  \begin{equation}
\left(\frac{d}{m_i}\right)=1,  
\left(\frac{d}{e_i}\right)=s.
\end{equation}
\begin{equation}
\left(\frac{p_i}{me_1e_2}\right)=1.
\end{equation}

In the case when $d$ is odd, the generic characters include the Legendre symbols 
modulo the primes $m_i$, $p_i$ and $e_i$. When 
$D\equiv 3 \pmod 4$ we also have the character modulo $4$. For even $d$, additionally we have 
the characters $\eta$ or $\delta\eta$ depending on whether $D\equiv 2$ or $6 \pmod 8$. Recall that 
to determine the genus of a form we compute the generic 
values of an integer $\theta$ represented by the form such that
$\gcd(\theta, 2D)=1$.

\begin{claim}
The forms
 $(me_1e_2, 0, -d)$ when $t=1$ and 
 $(d, 0, -me_1e_2)$ when $t=-1$ 
 are in the principal genus. 
\end{claim}
Let $$\theta_1=t(4me_1e_2-d),\hskip2mm   
\theta_2=t(me_1e_2-4d).$$ 
Note that when $t=1$ both $\theta_1$ and $\theta_2$ are represented
by the form $(me_1e_2, 0, -d)$, and when $t=-1$ both are represented by
$(d, 0, -me_1e_2)$.  We first consider $d$ odd.
Then $(\theta_1\theta_2, 2D)=1$, so we may use these integers to 
show that all the generic characters 
have value $1$ for the forms in consideration.

From equation (3.2) we have 
$$\left(\frac{\theta_2}{p_i}\right)=
\left(\frac{t}{p_i}\right)
\left(\frac{me_1e_2}{p_i}\right)=1,$$ 
noting that 
$me_1e_2\equiv t \pmod 8$.

Next we have 
$$\left(\frac{\theta_1}{m_i}\right)=
\left(\frac{t}{m_i}\right)
\left(\frac{-d}{m_i}\right)=\left(\frac{d}{m_i}\right)=1$$ using (3.1) 
and as $m_i\equiv 1\pmod 4$.

Finally 
$$\left(\frac{\theta_1}{e_i}\right)=
\left(\frac{t}{e_i}\right)
\left(\frac{-d}{e_i}\right)=
\left(\frac{t}{e_i}\right)
\left(\frac{-1}{e_i}\right)s$$
using (3.1) again.
The above value is $1$ if $t=-1$ (and hence $s=1$). If $t=1$ the value is again $1$, 
as $s=-1$ and 
$e_1\equiv e_2\equiv 3\mod 4$.

If $d$ is even then 
$\theta_2\equiv tme_1e_2\equiv  t^2 \pmod 8$ and hence the values for the generic characters
$\eta$ and $\delta$ are both $1$. 

When $D\equiv dt\equiv 3\pmod 4$ we also have the character $\delta$  modulo $4$.
Here $\theta_1\equiv -dt\equiv 1\pmod 4$, and hence 
$\delta(\theta_1)=1$. As all the generic values are equal to $1$, the claim follows.

Next we will show that none of the other ambiguous forms given in (\ref{normform}) is in the 
principal genus. 
\begin{claim} Of all the forms $(a, 0, -b)$, with $ab=mde_1e_2$ and $a>1$, the only ones that are in the principal genus are the ones given in Claim 1.
\end{claim}
Consider the form 
$F=(R_1d_1, 0, -R_2d_2)$, where
 $$R=R_1R_2=me_1e_2, {\text{ and }} d=d_1d_2.$$ 
When  $d$ is odd let
$$
\theta_1=4R_1d_1-R_2d_2 {\text{ and }}
\theta_2=R_1d_1-4R_2d_2. $$ 
For $d$ even, we choose $\theta_1=\theta_2=R_1d_1-R_2d_2$.
Both $\theta_1$ and $\theta_2$ are integers represented by the form $(R_1d_1, 0, -R_2d_2)$ and 
satisfy $\gcd(\theta_1\theta_2, 2D)=1$ and hence we will use these integers to 
evaluate the generic values of $F$.

Suppose that $m_1|R_1$. If $F$ is in the principal genus then 
$$\left(\frac{\theta_1}{m_1}\right)=
\left(-\frac{R_2}{m_1}\right)
\left(\frac{d_2}{m_1}\right)=
\left(\frac{d_2}{m_1}\right)=1.$$ Similarly if 
$m_1|R_2$
$$\left(\frac{\theta_2}{m_1}\right)=
\left(\frac{d_1}{m_1}\right)=1.$$
As $\left(\frac{d}{m_1}\right)=1$ we have 
$\left(\frac{d_1}{m_1}\right)=
\left(\frac{d_2}{m_1}\right)=1$.
   Looking at the first 
column of Table 1 (in particular the minus signs), we conclude that 
either  
$p_1p_2|d_1$ or $p_1p_2|d_2$.
 Using the same reasoning 
for the prime $m_2$, we obtain that $p_1 p_3$ divides either 
$d_1$ or $d_2$, so that combining with the conclusion above, we obtain that $p_1p_2p_3$
divides either 
$d_1$ or $d_2$. Continuing in this way with all the primes
 $m_3\cdots m_{r-1}$, we obtain that either
$d|d_1$ or $d|d_2$  and therefore
$$F=(R_1d, 0, -R_2) {\text{ or }} (R_1,0, -R_2d).$$

Consider the form $(R_1d, 0, -R_2)$. Then 
$$
\theta_1=4R_1d-R_2 {\text{ and }}
\theta_2=R_1d-4R_2. $$ 

If $e_2|R_1$ and $e_1|R_2$, then 
$F$ is not in the principal genus as 
$$\left(\frac{\theta_1}{e_2}\right)=
\left(\frac{-R_2}{e_2}\right)=
\left(\frac{-1}{e_2}\right)
\left(\frac{e_1}{e_2}\right)
\left(\frac{R_2/e_1}{e_2}\right)=
\left(\frac{-1}{e_2}\right)t=
\left(\frac{-1}{3t}\right)t=-1,$$
where we have used assumptions 1. and 2. from the hypotheses, as well as the 
fact that $e_2\equiv 3t\pmod 8$.
If $e_1e_2|R_1$, again $F$ is not in the principal genus as
$$\left(\frac{\theta_1}{e_1}\right)=
\left(\frac{-R_2}{e_1}\right)=
\left(\frac{-1}{e_1}\right)=-1.$$
Next consider the case when 
 $e_2|R_2$. Suppose that $e_1|R_1$. Then we have 
$$\left(\frac{\theta_2}{e_2}\right)=
\left(\frac{e_1}{e_2}\right)
\left(\frac{d}{e_2}\right)
\left(\frac{R_1/e_1}{e_2}\right)
=ts=-1,$$
using hypotheses 1. and 2. and (3.1). 
Assume now that  $e_1e_2|R_2$.
Then
$$\left(\frac{\theta_2}{e_1}\right)=
\left(\frac{R_1}{e_1}\right)
\left(\frac{d}{e_1}\right)=s.$$
Therefore if $s=-1$ then this form is not in the principal genus.
Suppose that $s=1$ (and hence $t=-1$). 
Then we have $e_1e_2\equiv 3\pmod 8$
and 
$\left(\frac{p_{2}}{e_1e_2}\right)=-1$ and thus 
\begin{equation}{\label{p2}}
\left(\frac{\theta_1}{p_{2}}\right)=
\left(\frac{-R_2}{p_{2}}\right)=
\left(\frac{-e_1e_2}{p_{2}}\right)
\left(\frac{R_2/(e_1e_2)}{p_{2}}\right)
=
-\left(\frac{R_2/(e_1e_2)}{p_{2}}\right). 
\end{equation}
Now if $p_2=2$, then $d=2p_1$ and $m=m_1$, and the forms to consider here are
$(2p_1, 0, -e_1e_2m)$ (in which case the claim holds), and $(2mp_1, 0, -e_1e_2)$.  For the
latter form  consider  
$\theta=8pm-e_1e_2\equiv 5\pmod 8$, which implies that 
$\eta(\theta)=-1$ and $\delta(\theta)=1$, and therefore both characters 
$\eta$ and $\delta\eta$ have value $-1$, and hence the form is not in the principal genus.
We may assume now that $p_2$ is odd.

If $F$ is in the principal genus then from (\ref{p2}) we have 
$$
\left(\frac{R_2/(e_1e_2)}{p_{2}}\right)=
\left(\frac{p_{2}}{R_2/(e_1e_2)}\right)=
-1.$$
Looking at the second row in Table 1 we infer that 
$m_{1}|R_2$, as we must choose the first minus sign in the row
corresponding to $p_2$. Using an identical reasoning with the 
primes $p_{3}, p_4\cdots p_{r-1}$ we obtain 
that $\frac{m}{m_{r-1}}|R_2$. Now in the case when $d$ is odd,
we may also consider the prime $p_r$, to obtain $m_r|R_2$ and 
hence  $m|R_2$, yielding the claimed form 
$(d, 0, -me_1e_2)$. In the case when $d$ is even, as $p_r=2$, we 
have the  characters corresponding to
either $\eta$ or $\delta\eta$. Consider $\theta=4R_1d-R_2$. Then 
$\theta\equiv -\frac{me_1e_2}{m_{r-1}}\equiv 5 \pmod 8$ ($t=-1$). Hence 
both the characters  
$\eta$ or $\delta\eta$ have value $-1$ and thus
$F$ is not in the principal genus.

Now we turn to the form 
$(R_1,0, -R_2d)$. In this case we have
$$
\theta_1=4R_1-R_2 d {\text{ and }}
\theta_2=R_1-4dR_2. $$ 

 Suppose $e_1|R_1$ and $e_2|R_2$. Then using (3.1), 1. and 2. from the hypotheses we have
$$\left(\frac{\theta_1}{e_1}\right)=
\left(\frac{-R_2d}{e_1}\right)=
-\left(\frac{e_2}{e_1}\right)
\left(\frac{R_2/e_2}{e_1}\right)
\left(\frac{d}{e_1}\right)
=s.$$
Also
$$\left(\frac{\theta_2}{e_2}\right)=
\left(\frac{R_1}{e_2}\right)=
\left(\frac{e_1}{e_2}\right)
=t.$$ 
Therefore one of $\left(\frac{\theta_1}{e_1}\right)$ or
$\left(\frac{\theta_2}{e_2}\right)$ is $-1$ and hence the 
form in consideration is not in the principal genus.
  In the case when $e_1|R_2$ and $e_2|R_1$  we get 
$$\left(\frac{\theta_2}{e_1}\right)=
\left(\frac{R_1}{e_1}\right)=
\left(\frac{e_2}{e_1}\right)=
-1.$$ Hence we now
assume that either $R_1$ or $R_2$ is divisible by $e_1e_2$. 

Suppose that $R_1$  is divisible by $e_1e_2$. Note that $e_1e_2\equiv t\pmod 4$.
If $t=-1$ then the form is not in the principal genus as
$$\left(\frac{\theta_1}{e_1e_2}\right)=
-\left(\frac{R_2d}{e_1e_2}\right)=
-s^2$$ 
When $t=1$, we  have for all $i$ 
$$\left(\frac{e_1e_2}{p_{i}}\right)=-1$$ 
and thus 
$$\left(\frac{\theta_2}{p_{i}}\right)=
\left(\frac{R_1}{p_{i}}\right)=
\left(\frac{e_1e_2}{p_{i}}\right)
\left(\frac{R_1/(e_1e_2)}{p_{i}}\right)=
-\left(\frac{R_1/(e_1e_2)}{p_{i}}\right).
$$
To be in the principal genus the above value must be equal 
to $1$ for all odd $p_i$, which means that 
$\frac{R_1}{e_1e_2}\ne 1$. If $r=2$ ($m=m_1$), it follows that 
$R_1=me_1e_2$ and we have the desired form in the principal genus. We may assume now that 
$r\ge 3$.

If $F$ is in the principal genus then for the generic
character corresponding to $p_{r-1}$,  we have 
$\left(\frac{R_1/e_1e_2}{p_{r-1}}\right)
=-1$,
which means that 
$m_{r-2}|R_1$ (we need to choose the minus sign corresponding to
$m_{r-2}$ in the last but one row). Using the exact same reasoning with each prime 
$p_{r-2},\cdots p_1$ we arrive at the fact that $\frac{m}{m_{r-1}}|R_1$. In the case when 
$d$ is odd, we also have that $m_{r-1}|R_1$ (using the last row) and hence our form is
$(me_1e_2, 0, -d)$ and the claim is true.

In the case when $d$ is even, we have the form 
$(R/m_{r-1}, 0, -m_{r-1}d)$. Consider 
$\theta=R/m_{r-1}-4m_{r-1}d$.
Then
$\theta\equiv R/m_{r-1}\equiv 5 \pmod 8$ ($t=1$) which means that the characters 
$\eta(\theta)$ and $\eta\delta(\theta)$ are both $-1$ and hence
the form in question is not in the principal genus.

It remains to consider the case when 
$R_2$  is divisible by $e_1e_2$. If the form is in the principal genus, then for
all odd $p_i$ we have 

$$\left(\frac{\theta_2}{p_{i}}\right)=
\left(\frac{R_1}{p_{i}}\right)=
\left(\frac{p_i}{R_{1}}\right)=1.$$

Reasoning as above for each $i=r, r-1, \cdots 2$ (looking at the $i^{\rm th}$ row), we get
that $m|R_2$ if $d$ is odd, giving the form
 $(1, 0, -Rd)$ (which is not in the list (\ref{normform})). For
$d$  even as before we get the form 
$(m_{r-1}, 0, -Rd/m_{r-1})$, and  
since $m_{r-1}\equiv 5 \pmod 8$, as argued above, we
conclude that the form is not in the principal genus.
%\end{proof}
\begin{claim} The forms $(2a, 2a, \frac{a-b}{2})$ for $me_1e_2d=ab$ 
are not in the principal genus.
\end{claim}
%\begin{proof}
Note from Lemma 2.2 that this case occurs only
when $ab=me_1e_2d\equiv 3\pmod 4$.
For the forms in question we consider 
$\theta=\frac{a-b}{2}$.  
We have
$$
\left(\frac{\theta}{m_{r-1}}\right)=
\left(\frac{4\theta}{m_{r-1}}\right)=
\left(\frac{2(a-b)}{m_{r-1}}\right)=
-\left(\frac{a-b}{m_{r-1}}\right)=-1
$$ 
since $m_{r-1}\equiv 5 \pmod 8$ and as $m_{r-1}|a$ or $m_{r-1}|b$, we have
 $\left(\frac{a-b}{m_{r-1}}\right)=1$ (using (3.1) and statement 1. from the hypotheses).

 \end{proof}

\begin{cor} Let $d$ be a square-free positive integer that is not prime. Then there exists a square-free positive integer $M$ with $\lambda(M)=-\lambda(d)$ that satisfies one of the following.
\begin{enumerate}
\item
If $\lambda(d)=1$ then there 
are infinitely many  integer solutions $(y, k)$  and $(x, n)$ to 
$My^2-dk^2=1$ and $dx^2-Mn^2=1$.
\item
If $\lambda(d)=-1$ then there 
are infinitely many integer solutions $(x, n)$ to 
 $dx^2-Mn^2=1$.
\end{enumerate}
\end{cor}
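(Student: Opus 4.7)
The plan is to apply Theorem~\ref{m} after first producing primes $m_1,\ldots,m_{r-1}, e_1, e_2$ satisfying all its hypotheses, and then to extract the conclusion from Lemma~\ref{norm}. I would begin by fixing the sign $t$: the hypothesis of Theorem~\ref{m} forces $t=-1$ when $\lambda(d)=-1$, and allows either $t=1$ or $t=-1$ when $\lambda(d)=1$. For a given $t$, I must produce primes satisfying the many Legendre-symbol and congruence constraints in the theorem's statement. The standard strategy is to use quadratic reciprocity to convert each Legendre symbol $\left(\frac{p_j}{m_i}\right)$ or $\left(\frac{m_i}{m_j}\right)$ into a congruence condition on the unknown prime modulo primes that are already fixed, and then to combine all conditions on a given prime into a single system modulo the $p_j$, modulo $8$, and modulo the previously selected auxiliary primes. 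I would choose the primes sequentially---the $m_i$ first, then $e_1$, then $e_2$---invoking Dirichlet's theorem on primes in arithmetic progressions at each stage. Since each individual Legendre condition cuts $(\mathbb{Z}/p)^\times$ in half, the combined arithmetic progression is always nonempty, and taking the prime large enough ensures coprimality with $d$ and with all earlier choices.

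Setting $M=m_1m_2\cdots m_{r-1}e_1e_2$, I note that $M$ is square-free and coprime to $d$, and that $\lambda(M)=(-1)^{r+1}=-\lambda(d)$ since $M$ is a product of $r+1$ distinct primes. To apply Lemma~\ref{norm} with $d$ replaced by $Md$, I verify that the fundamental unit of $\mathbb{Q}(\sqrt{Md})$ has positive norm: since $e_1\equiv 3\pmod 4$ divides $Md$, a solution to $\alpha^2-Md\beta^2=-1$ would force $-1$ to be a quadratic residue modulo $e_1$, which it is not. Lemma~\ref{norm} then supplies a non-identity ambiguous form from the list~(\ref{normform}) in the principal class. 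Because the principal class sits inside the principal genus, Theorem~\ref{m} pins that form down: it must be $(M,0,-d)$ when $t=1$ and $(d,0,-M)$ when $t=-1$.

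Since a form is in the principal class precisely when it represents $1$, I obtain an integer solution to $My^2-dk^2=1$ (when $t=1$) or to $dx^2-Mn^2=1$ (when $t=-1$); multiplying by powers of the fundamental unit of $\mathbb{Z}[\sqrt{Md}]$ produces infinitely many. The case $\lambda(d)=-1$ permits only $t=-1$ and delivers part~(2); for $\lambda(d)=1$ I would run the construction once with each value of $t$, producing two integers $M$ which together realize part~(1). I expect the main obstacle to be the first step: one must verify that the intricate web of Legendre-symbol and $\pmod 8$ conditions in Table~1, together with hypotheses~(1)--(2) of Theorem~\ref{m}, is simultaneously satisfiable---an exercise in careful reciprocity bookkeeping that should be tedious but entirely mechanical.
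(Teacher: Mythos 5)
Your proposal is correct and follows essentially the same route as the paper: construct the primes $m_i, e_i$ via the Chinese Remainder Theorem and Dirichlet's theorem, note that $e_1\equiv 3\pmod 4$ forces the fundamental unit of discriminant $Md$ to have positive norm, invoke Lemma \ref{norm} to place a nontrivial ambiguous form from (\ref{normform}) in the principal class, and use Theorem \ref{m} to identify it as $(M,0,-d)$ or $(d,0,-M)$ according to $t$. The only differences are matters of detail you fill in (the explicit quadratic-residue argument for the norm, the count $\lambda(M)=(-1)^{r+1}$, and generating infinitely many representations of $1$ from the fundamental unit), all of which the paper leaves implicit.
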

\begin{proof}
For any square-free positive integer $d$, 
using the Chinese Remainder Theorem we may find integers $m_i$ and $e_i$ that satisfy all the 
conditions given in Theorem \ref{m}. To further ensure that these integers are primes, we may
use Dirichlet's theorem of primes in arithmetic progressions.

We apply  Theorem \ref{m} with $t=\pm 1$ in the first case above 
 and $t=-1$ for the second case (as required by the theorem when $\lambda(d)=-1$). 
Note that 
$M=me_1e_2$ and $\lambda(M)=-\lambda(d)$.

Observe that as $D=mde_1e_2$ has a prime 
divisor congruent to $3\pmod 4$, namely $e_1$, the norm of the fundamental unit 
is $1$. Therefore by Lemma 2.2 
one of the forms given in (\ref{normform}) is in the principal class and hence in the principal genus.  The result now follows 
from Theorem \ref{m} which states that the only forms in the principal genus are  
$(M, 0, -d)$ when $t=1$ and 
 $(d, 0, -M)$ when $t=-1$. The corollary now follows as all forms in the principal class
represent the integer $1$.
\end{proof}
\end{section}

\begin{section} {Sign changes for $\lambda(x^2\pm d)$}
In this section we will prove our main result, Theorem 1.3.

Suppose that $D=dt^2$, where $d$ is square-free. From Lemma 3.3 if $\lambda(n^2+d)$ changes sign 
infinitely often, then so does $\lambda(n^2+D)$. Thus we may assume that $d$ is square-free and positive
when we consider polynomials $x^2\pm d$. By Lemma 3.4 we may further assume that $d$ is not prime.

Consider $n^2+d$ where $d>0$. 
By Lemmas 3.1 and 3.2 we may assume that $\lambda(d)=1$.  From Corollary 3.6 we 
have an integer $M$ such that there are infinitely many solutions $(k, l)$ to 
$Mk^2-dl^2=1$ with $\lambda(M)=-\lambda(d)$. 
 Taking $n=dl$ we have 
$n^2+d=d(d l^2+1 )=dMk^2$. For $n^2-d$ with $d>0$, again Corollary 3.6 gives an integer 
$M$ with $\lambda(M)=-\lambda(d)$ such that 
there are infinitely many solutions $(k, l)$ to 
$Mk^2-dl^2=-1$. As above, if $n=dl$ then
$n^2-d=d(dl^2-1)=dMk^2$. 

 Therefore we have $\lambda(n^2\pm d)=\lambda(dM)=-1$ and hence $\lambda(n^2\pm d)$ takes
on the negative sign infinitely often and hence the result follows from Lemma 3.1.

\end{section}

\end{document}